%% LyX 2.3.3 created this file.  For more info, see http://www.lyx.org/.
%% Do not edit unless you really know what you are doing.
\documentclass[oneside,english]{amsart}
\usepackage[T1]{fontenc}
\usepackage[latin9]{inputenc}
\usepackage{amstext}
\usepackage{amsthm}
\usepackage{amssymb}
\usepackage{comment}

\makeatletter
%%%%%%%%%%%%%%%%%%%%%%%%%%%%%% Textclass specific LaTeX commands.
\numberwithin{equation}{section}
\numberwithin{figure}{section}
\theoremstyle{plain}
\newtheorem{thm}{\protect\theoremname}
\theoremstyle{remark}
\newtheorem{rem}[thm]{\protect\remarkname}

%%%%%%%%%%%%%%%%%%%%%%%%%%%%%% User specified LaTeX commands.
\usepackage{amsrefs}

\newtheorem{hyp}[thm]{Hypothesis}

\makeatother

\usepackage{babel}
\providecommand{\remarkname}{Remark}
\providecommand{\theoremname}{Theorem}

\begin{document}
\title{Principal series component of gelfand-graev representation}
\author{Manish Mishra}
\email{manish@iiserpune.ac.in}
\thanks{The first named author was partially supported by SERB MATRICS 
and SERB ERCA grants}
\author{Basudev Pattanayak}
\email{basudev.pattanayak@students.iiserpune.ac.in}
\thanks{The second named author was supported by CSIR fellowship, Govt. of India}
\address{Indian Institute of Science Education and Research Pune, Dr. Homi
Bhabha Road, Pasha, Pune 411008, Maharashtra, India}

\begin{abstract}
Let $G$ be a connected reductive group defined over a non-archimedean
local field $F$. Let $B$ be a minimal $F$-parabolic subgroup with
Levi factor $T$ and unipotent radical $U$. Let $\psi$ be a non-degenerate
character of $U(F)$ and $\lambda$ a character of $T(F)$. Let $(K,\rho)$
be a Bushnell-Kutzko type associated to the Bernstein block of $G(F)$
determined by the pair $(T,\lambda)$. We study the $\rho$-isotypical
component $(c\text{-ind}_{U(F)}^{G(F)}\psi)^{\rho}$ of the induced
space $c\text{-ind}_{U(F)}^{G(F)}\psi$ of functions compactly supported
mod $U(F)$. We show that $(c\text{-ind}_{U(F)}^{G(F)}\psi)^{\rho}$
is cyclic module for the Hecke algebra $\mathcal{H}(G,\rho)$ associated
to the pair $(K,\rho)$. When $T$ is split, we describe it more explicitly
in terms of $\mathcal{H}(G,\rho)$. We make assumptions on the residue
characteristic of $F$ and later also on the characteristic of $F$
and the center of $G$ depending on the pair $(T,\lambda)$. Our results
generalize the main result of Chan and Savin in \cite{CS18} who treated
the case of $\lambda=1$ for $T$ split. 
\end{abstract}

\maketitle

\section{Introduction}

Let $G$ be a connected reductive group defined over a non-archimedean
local field $F$. Fix a minimal $F$-parabolic subgroup $B=TU$ of
$G$ with unipotent radical $U$ and whose Levi factor $T$ contains
a maximal $F$-split torus of $G$. Let $\psi$ be a nondegenerate
character of $U(F)$ and consider the induced representation $c\text{-ind}_{U(F)}^{G(F)}\psi$
realized by functions whose support is compact mod $U(F)$. It is
a result of Bushnell and Henniart \cite{BH03} that the Bernstein
components of $c\text{-ind}_{U(F)}^{G(F)}\psi$ are finitely generated. 

Now let $\lambda$ be a character of $T(F)$. The pair $(T,\lambda)$
determines a Bernstein block $\mathcal{R}^{[T,\lambda]_{G}}(G(F))$
in the category of smooth representations $\mathcal{R}(G(F))$ of
$G(F)$. Bushnell-Kutzko types are known to exist for Bernstein blocks
under suitable residue characteristic hypothesis \cite{KY17, Fin}.
Let $(K,\rho)$ be a $[T,\lambda]_{G}$-type and let $\mathcal{H}(G,\rho)$
be the associated Hecke algebra. We show in Theorem \ref{thm:cyc}
that the $\rho$-isotypical component $(c\text{-ind}_{U(F)}^{G(F)}\psi)^{\rho}$
is a cyclic $\mathcal{H}(G,\rho)$-module. 

Now assume that $T$ is split and $\psi$ is a non-degenerate character of $U(F)$ of generic depth zero (see \S \ref{sec:statement}). If $\lambda\neq1$, then assume further
that the center of $G$ is connected. In that case, $\mathcal{H}(G,\rho)$
is an Iwahori-Hecke algebra. It contains a finite subalgebra $\mathcal{H}_{W,\lambda}$.
The algebra $\mathcal{H}_{W,\lambda}$ has a one dimensional representation
$\text{sgn}$. We show in Theorem \ref{thm:main} that the $\mathcal{H}(G,\rho)$-module
$(c\text{-ind}_{U(F)}^{G(F)}\psi)^{\rho}$ is isomorphic to $\mathcal{H}(G,\rho)\underset{\mathcal{H}_{W,\lambda}}{\otimes}\text{sgn}$.
For positive depth $\lambda$, Theorem \ref{thm:main} assumes that
$F$ has characteristic $0$ and its residue characteristic is not
too small. 

Theorems \ref{thm:cyc} and \ref{thm:main} generalize the main result
of Chan and Savin in \cite{CS18} who treat the case $\lambda=1$
for $T$ split, i.e., unramified principal series blocks of split
groups. Our proofs benefit from the ideas in \cite{CS18}. However
they are quite different. The existence of a generator in $(c\text{-ind}_{U(F)}^{G(F)}\psi)^{\rho}$
is concluded by specializing quite general results in \cite{BH03, BK98}.
For Theorem \ref{thm:main}, instead of computing the effect of intertwiners
on the generator as in \cite{CS18}, we make a reduction to depth-zero
case and then to a finite group analogue of the question. There it
holds by a result of Reeder \cite{Re02}*{\S 7.2}.

\section{Notations}

Throughout, $F$ denotes a non-archimedean local field. Let $\mathcal{O}$
denote the ring of integers of $F$. We denote by $q$, the cardinality
of the residue field $\mathbb{F}_{q}$ of $F$ and by $p$ the characteristic
of $\mathbb{F}_{q}$. 

\section{\label{sec:roche}Preliminaries}

We use this section to recall some basic theory and also fix notation. 

\subsection{Bernstein decomposition}

Let $\mathcal{R}(G(F))$ denote the category of smooth complex representations
of $G(F)$. The \textit{Bernstein decomposition} gives a direct product
decomposition of $\mathcal{R}(G(F))$ into indecomposable subcategories:
\[
\mathcal{R}(G(F))=\prod_{\mathfrak{s}\in\mathfrak{B}(G)}\mathcal{R}^{\mathfrak{s}}(G(F)).
\]
 Here $\mathfrak{B}(G)$ is the set of \textit{inertial equivalence
classes, }i.e., equivalence classes $[L,\sigma]_{G}$ of cuspidal
pairs $(L,\sigma)$, where $L$ is an $F$-Levi, $\sigma$ is a supercuspidal
of $L(F)$ and where the equivalence is given by conjugation by $G(F)$
and twisting by unramified characters of $L(F)$. The block $\mathcal{R}^{[L,\sigma]_{G}}(G(F))$
consists of those representations $\pi$ for which each irreducible
constituent of $\pi$ appears in the parabolic induction of some supercuspidal
representation in the equivalence class $[L,\sigma]_{G}$. 

\subsection{Hecke algebra \cite{BK98}}

Let $(\tau,V)$ be an irreducible representation of a compact open
subgroup $J$ of $G(F)$. The Hecke algebra $\mathcal{H}(G,\tau)$
is the space of compactly supported functions $f:G(F)\rightarrow\mathrm{End}_{\mathbb{C}}(V^{\vee})$
satisfying,
\[
f(j_{1}gj_{2})=\tau^{\vee}(j_{1})f(g)\tau^{\vee}(j_{2})\text{ }\text{for all }j_{1,}j_{2}\in J\text{ }\text{and }g\in G(F).
\]
Here $(\tau^{\vee},V^{\vee})$ denotes the dual of $\tau$. The standard
convolution operation gives $\mathcal{H}(G,\tau)$ the structure of
an associative $\mathbb{C}$-algebra with identity. 

Let $\mathcal{R}_{\tau}(G(F))$ denote the subcategory of $\mathcal{R}(G(F))$
whose objects are the representations $(\pi,\mathcal{V})$ of $G(F)$
generated by the $\tau$-isotypic subspace $\mathcal{V}^{\tau}$ of
$\mathcal{V}$. There is a functor 
\[
\mathbf{M}_{\tau}:\mathcal{R}_{\tau}(G(F))\rightarrow\mathcal{H}(G,\tau)\text{-Mod},
\]
 given by 
\[
\pi\mapsto\mathrm{Hom}_{J}(\tau,\pi).
\]
 Here $\mathcal{H}(G,\tau)\text{-Mod}$ denotes the category of unital
left modules over $\mathcal{H}(G,\tau)$. 

For $\mathfrak{s}\in\mathfrak{B}(G)$, the pair $(J,\tau)$ is an
$\mathfrak{s}$-type if $\mathcal{R}_{\tau}(G(F))=\mathcal{R}^{\mathfrak{s}}(G(F))$.
In that case, the functor $\mathbf{M}_{\tau}$ gives an equivalence
of categories. 

\subsection{G-cover \cite{BK98}}

Let $(K_{M,}\rho_{M})$ be a $[M,\sigma]_{M}$-type. Let $(K,\rho)$
be a pair consisting of a compact open subgroup $K$ or $G(F)$ and
an irreducible representation $\rho$ of $K$. Suppose that for any
opposite pair of $F$-parabolic subgroups $P=MN$ and $\bar{P}=M\bar{N}$
with Levi factor $M$ and unipotent radicals $N$ and $\bar{N}$ respectively,
the pair $(K,\rho)$ satisfies the following properties:

\begin{enumerate}

\item[(1)]$K$ decomposes with respect to $(N,M,\bar{N})$, i.e.,
\[
K=(K\cap N)\ldotp(K\cap M)\ldotp(K\cap\bar{N}).
\]

\item[(2)]$\rho|K_{M}=\rho_{M}$ and $K\cap N$, $K\cap\bar{N}\subset\mathrm{ker}(\rho)$. 

\item[(3)]For any smooth representation $(\pi,V)$ of $G(F),$ the
natural projection $V$ to the Jacquet module $V_{N}$ induces an
injection on $V^{\rho}$. 

\end{enumerate}

The pair $(K,\rho)$ is then called the $G$-cover of $(K_{M},\rho_{M})$.
See \cite{Bl}*{Theorem 1} for this reformulation of the original
definition of $G$-cover due to Bushnell and Kutzko \cite{BK98}*{\S 8.1}
(see also \cite{KY17}*{\S 4.2}). If $(K,\rho)$ is a $G$-cover of
$(K_{M},\rho_{M})$, then it is an $[M,\sigma]_{G}$-type. 

Suppose $(K,\rho)$ is a $G$-cover of $(K_{M},\rho_{M})$, then for
any $F$-parabolic subgroup $P'=MN'$ with Levi factor $M$ and unipotent
radical $N'$, there is a $\mathbb{C}$-algebra embedding \cite{BK98}*{\S 8.3}
\begin{equation}
t_{P'}:\mathcal{H}(M,\rho_{M})\rightarrow\mathcal{H}(G,\rho),\label{eq:HMG}
\end{equation}
with the property that for any smooth representation $\Upsilon$ of
$G(F)$,
\begin{equation}
\mathbf{M}_{\rho_{M}}(\Upsilon_{N'})\cong t_{P'}^{*}(\mathbf{M}_{\rho}(\Upsilon)).\label{eq:VMG}
\end{equation}
 Here $t_{P'}^{*}:\mathcal{H}(G,\rho)\text{-mod }\rightarrow\mathcal{H}(M,\rho_{M})\text{-mod }$
induced by $t_{P'}$. 

Kim and Yu \cite{KY17}, using Kim's work \cite{Kim07}, showed that
Yu's construction of supercuspidals \cite{Yu01} can be used to produce
$G$-covers of $[M,\sigma]_{M}$-types for all $[M,\sigma]_{G}\in\mathfrak{B}(G)$,
assuming $F$ has characteristic $0$ and the residue characteristic
$p$ of $F$ is suitably large. Recently Fintzen \cite{Fin}, using
an approach different from Kim, has shown the construction of types
for all Bernstein blocks without any restriction on the characteristic
of $F$ and assuming only that $p$ does not divide the order of the
Weyl group of $G$. 

\section{\label{sec:GG}Gelfand-Graev spaces}

Let $G$ be a connected reductive group defined over $F$. Fix a maximal
$F$-split torus $S$ in $G$ and let $T$ denote its centralizer.
Then $T$ is the Levi factor of a minimal $F$-parabolic subgroup
$B$ of $G$. We denote the unipotent radical of $B$ by $U$. A smooth
character 
\[
\psi:U(F)\rightarrow\mathbb{C}^{\times}
\]
 is called non-degenerate if its stabilizer in $S(F)$ lies in the
center $Z$ of $G$. 

The Gelfand-Graev representation $c\text{-ind}_{U(F)}^{G(F)}\psi$
of $G(F)$ is provided by the space of right $G(F)$-smooth compactly
supported modulo $U(F)$ functions $f:G(F)\rightarrow\mathbb{C}$
satisfying:
\[
f(ug)=\psi(u)f(g),\text{ }\forall u\in U(F),g\in G(F).
\]

Let $M$ be a $(B,T)$-standard $F$-Levi subgroup of an $F$-parabolic
$P=MN$ of $G$, i.e., $M$ contains $T$ and $P$ contains $B$.
Then $B\cap M$ is a minimal parabolic subgroup of $M$ with unipotent
radical $U_{M}:=U\cap M$. Also, $\psi_{M}:=\psi|M$ is a non-degenerate
character of $U_{M}(F)$. As before, denote by $\bar{P}=M\bar{N}$,
the opposite parabolic subgroup. We also have an isomorphism of $M(F)$
representations \cite{BH03}*{\S 2.2, Theorem}
\begin{equation}
c\text{-ind}_{U_{M}(F)}^{M(F)}\psi_{M}\cong(c\text{-ind}_{U(F)}^{G(F)}\psi)_{\bar{N}}.\label{eq:cindU}
\end{equation}

Now let $\sigma$ be a character of $T(F)$. Let $(K_{T},\rho_{T})$
be a $[T,\sigma]_{T}$-type and let $(K,\rho)$ denote its $G$-cover.
We assume that the residue characteristic $p$ does not divide the
order of the Weyl group of $G$, so that $(K,\rho)$ exists by \cite{Fin}.
Let $\bar{B}=T\bar{U}$ denote the opposite Borel. View $\mathcal{H}(T,\rho_{T})$
as a subalgebra of $\mathcal{H}(G,\rho)$ via the embedding: 
\[
t_{\bar{B}}:\mathcal{H}(T,\rho_{T})\rightarrow\mathcal{H}(G,\rho),
\]
of Equation (\ref{eq:HMG}). 
\begin{thm}
\label{thm:cyc}There is an isomorphism 
\[
(c\emph{-ind}_{U(F)}^{G(F)}\psi)^{\rho}\cong\mathcal{H}(T,\rho_{T})
\]
of $\mathcal{H}(T,\rho_{T})$-modules. Consequently, $(c\emph{-ind}_{U(F)}^{G(F)}\psi)^{\rho}$
is a cyclic $\mathcal{H}(G,\rho)$-module. 
\end{thm}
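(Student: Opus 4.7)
The strategy is to transport the problem to the torus $T$ by taking the Jacquet module along $\bar{B}$: by (\ref{eq:cindU}) this replaces $c\text{-ind}_{U(F)}^{G(F)}\psi$ with the trivial Gelfand--Graev space on $T$, whose associated Hecke module can be computed directly.

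Concretely, apply (\ref{eq:cindU}) with $M=T$: since $U_T=U\cap T=\{1\}$ and $\psi_T$ is trivial, we obtain $(c\text{-ind}_{U(F)}^{G(F)}\psi)_{\bar U}\cong c\text{-ind}_{\{1\}}^{T(F)}\mathbb{C}=C_c^\infty(T(F))$ as smooth $T(F)$-representations. Combining this with (\ref{eq:VMG}) for $P'=\bar B$ yields the $\mathcal{H}(T,\rho_T)$-module isomorphism
\[
t_{\bar B}^{*}\,\mathbf{M}_{\rho}\bigl(c\text{-ind}_{U(F)}^{G(F)}\psi\bigr)\;\cong\;\mathbf{M}_{\rho_T}\bigl(C_c^\infty(T(F))\bigr).
\]
It therefore suffices to show that the right-hand side is $\mathcal{H}(T,\rho_T)$ as a free rank-one module over itself.

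For this, the crucial observation is that $T(F)$ is commutative, so $K_T$ is an abelian profinite group and $\rho_T$ is a character. Induction in stages and the Peter--Weyl decomposition $C^\infty(K_T)\cong\bigoplus_{\chi\in\widehat{K_T}}\chi$ yield
\[
C_c^\infty(T(F))\;\cong\;\bigoplus_{\chi\in\widehat{K_T}}c\text{-ind}_{K_T}^{T(F)}\chi.
\]
By commutativity of $T(F)$, each summand $c\text{-ind}_{K_T}^{T(F)}\chi$ is entirely $\chi$-isotypic as a $K_T$-representation; hence $\mathbf{M}_{\rho_T}$ kills every summand with $\chi\neq\rho_T$. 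For $\chi=\rho_T$, Frobenius reciprocity identifies
\[
\mathbf{M}_{\rho_T}\bigl(c\text{-ind}_{K_T}^{T(F)}\rho_T\bigr)\;\cong\;\mathrm{End}_{T(F)}\bigl(c\text{-ind}_{K_T}^{T(F)}\rho_T\bigr)\;\cong\;\mathcal{H}(T,\rho_T),
\]
which is the free rank-one $\mathcal{H}(T,\rho_T)$-module. Cyclicity over $\mathcal{H}(G,\rho)$ then follows at once from the inclusion $\mathcal{H}(T,\rho_T)\hookrightarrow\mathcal{H}(G,\rho)$ via $t_{\bar B}$: the identity of $\mathcal{H}(T,\rho_T)$ already generates the module over the smaller algebra.

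The principal obstacle is to verify the \emph{module} (not merely vector-space) structure of the last identification, i.e.\ that the $\mathcal{H}(T,\rho_T)$-action on the Hom space transported via $\mathbf{M}_{\rho_T}$ corresponds to composition on $\mathrm{End}_{T(F)}(c\text{-ind}_{K_T}^{T(F)}\rho_T)$ and thence to the left regular representation. This is a routine but careful bookkeeping inside the $G$-cover formalism of Section~\ref{sec:roche}; every other step above is either a direct specialization of (\ref{eq:cindU}) and (\ref{eq:VMG}) or a standard fact about abelian profinite groups.
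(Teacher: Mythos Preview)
Your argument is correct and follows essentially the same route as the paper: specialize (\ref{eq:cindU}) to $M=T$ to identify the Jacquet module with $C_c^\infty(T(F))$, invoke (\ref{eq:VMG}) for $\bar B$, and then identify $C_c^\infty(T(F))^{\rho_T}\cong\mathcal{H}(T,\rho_T)$ as a module over itself. The only difference is that the paper records the last identification as a standard fact in one line, whereas you spell it out via the Peter--Weyl decomposition and Frobenius reciprocity; both are fine.
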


\begin{proof}
Putting $M=T$ in Equation (\ref{eq:cindU}) and observing that in
this case, $U_{M}=1$, we get an isomorphism of $T(F)$ representations
\begin{eqnarray*}
(c\text{-ind}_{U(F)}^{G(F)}\psi)_{\bar{U}} & \cong & c\text{-ind}_{1}^{T(F)}\mathbb{C}\\
 & \cong & C_{c}^{\infty}(T(F)).
\end{eqnarray*}
Consequently, 
\begin{eqnarray*}
(c\text{-ind}_{U(F)}^{G(F)}\psi)_{\bar{U}}^{\rho_{T}} & \cong & C_{c}^{\infty}(T(F))^{\rho_{T}}\\
 & \cong & \mathcal{H}(T,\rho_{T})
\end{eqnarray*}
 as $\mathcal{H}(T,\rho_{T})$-modules. Now by Equation (\ref{eq:VMG}),
\[
(c\text{-ind}_{U(F)}^{G(F)}\psi)_{\bar{U}}^{\rho_{T}}\cong(c\text{-ind}_{U(F)}^{G(F)}\psi)^{\rho}
\]
 as $\mathcal{H}(T,\rho_{T})$-modules. The result follows. 
\end{proof}

\section{\label{sec:PSC}Principal series component}

\subsection{Some results of Roche }

In this subsection, we summarize some results of Roche in \cite{Ro98}. 

Let the notations be as in Section \ref{sec:GG}. We assume further
that $S=T$, so that $B=TU$ is now an $F$-Borel subgroup of $G$
containing the maximal $F$-split torus $T$. The pair $(B,T)$ determines
a based root datum $\Psi=(X,\Phi,\Pi,X^{\vee},\Phi^{\vee},\Pi^{\vee})$.
Here $X$ (resp. $X^{\vee}$) is the character (resp. co-character)
lattice of $T$ and $\Pi$ (resp. $\Pi^{\vee}$) is a basis (resp.
dual basis) for the set of roots $\Phi=\Phi(G,T)$ (resp. $\Phi^{\vee}$)
of $T$ in $G$. 

For the results of this subsection, we assume that $F$ has characteristic
$0$ and the residue characteristic $p$ of $F$ satisfies the following
hypothesis.

\begin{hyp}\label{hyp}

If $\Phi$ is irreducible, $p$ is restricted as follows:
\begin{itemize}
\item for type $A_{n}$, $p>n+1$
\item for type $B_{n},C_{n},D_{n}$ $p\neq2$
\item for type $F_{4}$, $p\neq2,3$
\item for types $G_{2},E_{6}$ $p\neq2,3,5$
\item for types $E_{7},E_{8}$ $p\neq2,3,5,7$ 
\end{itemize}
If $\Phi$ is not irreducible, then $p$ excludes primes attached
to each of its irreducible factors. 

\end{hyp}

We let $\mathcal{T}=T(\mathcal{O})$ denote the maximal compact subgroup
of $T(F)$, $N_{G}(T)$ to be the normalizer of $T$ in $G$ and $W=W(G,T)=N_{G}(T)/T=N_{G}(T)(F)/T(F)$
denote the Weyl group of $G$. 

Let $\chi^{\#}$ be a character of $T(F)$ and put $\chi=\chi^{\#}|T(F)_{0}$,
where $T(F)_{0}$ denotes the maximal compact subgroup of $T(F)$.
Then $(T(F)_{0},\chi)$ is a $[T,\chi^{\#}]_{T}$-type.

Let $N_{G}(T)(F)_{\chi}$ (resp. $N_{G}(T)(\mathcal{O})_{\chi}$,
resp. $W_{\chi}$) denote the subgroup of $N_{G}(T)(F)$ (resp. $N_{G}(T)(\mathcal{O})$,
resp. $W$) which fixes $\chi$. The group $N_{G}(T)(F)_{\chi}$ contains
$T(F)$ and we have $W_{\chi}=N_{G}(T)(F)_{\chi}/T(F)$. Denote by
$\mathcal{W}=\mathcal{W}(G,T)=N_{G}(T)(F)/\text{\ensuremath{\mathcal{T}} }$,
the Iwahori-Weyl group of $G$. There is an identification $N_{G}(T)(F)=X^{\vee}\rtimes N_{G}(T)(\mathcal{O})$
given by the choice of a uniformizer of $F$. Since $N_{G}(T)(\mathcal{O})/\mathcal{T}=N_{G}(T)/T$,
this identification also gives an identification $\mathcal{W}=X^{\vee}\rtimes W$.
Let $\mathcal{W}_{\chi}=X^{\vee}\rtimes W_{\chi}$ be the subgroup
of $\mathcal{W}$ which fixes $\chi$. 

Let 
\begin{eqnarray*}
\Phi^{\prime} & = & \{\alpha\in\Phi\mid\chi\circ\alpha^{\vee}|_{\mathcal{O}^{\times}}=1\}.
\end{eqnarray*}
Then $\Phi^{\prime}$ is a sub-root system of $\Phi$. Let $s_{\alpha}$
denote the reflection on the space $\mathcal{A}=X^{\vee}\otimes_{\mathbb{Z}}\mathbb{R}$
associated to a root $\alpha\in\Phi$ and write $W^{\prime}=\langle s_{\alpha}\mid\alpha\in\Phi^{\prime}\rangle$
to be the associated Weyl group. Let $\Phi^{+}$ (resp. $\Phi^{-}$)
be the system of positive (resp. negative) roots determined by the
choice of the Borel $B$ and let $\Phi^{\prime+}=\Phi^{+}\cap\Phi^{\prime}$.
Then $\Phi^{\prime+}$ is a positive system in $\Phi^{\prime}$. Put
\[
C_{\chi}=\{w\in W_{\chi}\mid w(\Phi^{\prime+})=\Phi^{\prime+}\}.
\]
 Then we have,
\[
W_{\chi}=W^{\prime}\rtimes C_{\chi}.
\]

The character $\chi$ extends to a $W_{\chi}$-invariant character
$\tilde{\chi}$ of $N_{G}(T)(\mathcal{O})_{\chi}$. Denote by $\tilde{\chi}$,
the character of $N_{G}(T)(F)_{\chi}$ extending $\tilde{\chi}$ trivially
on $X^{\vee}$. 

Roche's construction produces a $[T,\chi^{\#}]_{G}$-type $(K,\rho)$.
The pair $(K,\rho)$ depends on the choice of $B,T,\chi$ but not
on the extension $\chi^{\#}$ of $\chi$. Denote by $I_{G(F)}(\rho)$,
the set of elements in $G(F)$ which intertwine $\rho$. Equivalently,
$g\in I_{G(F)}(\rho)$ iff the double coset $KgK$ supports a non-zero
function in $\mathcal{H}(G,\rho)$. We have an equality 
\begin{equation}
I_{G(F)}(\rho)=K\mathcal{W_{\chi}}K.\label{eq:IGrho}
\end{equation}

For an element $w\in\mathcal{W}_{\chi}$, choose any representative
$n_{w}$ of $w$ in $N_{G}(T)(F)_{\chi}$ and let $f_{\tilde{\chi},w}$
be the unique element of the Hecke algebra $\mathcal{H}(G,\rho)$
supported on $Kn_{w}K$ and having value $q^{-\ell(w)/2}\tilde{\chi}(n_{w})^{-1}$.
Here $\ell$ is the length function on the affine Weyl group $\mathcal{W}$.
The functions $f_{\tilde{\chi},w}$ for $w\in\mathcal{W}_{\chi}$
form a basis for the $\mathbb{C}$-vector space $\mathcal{H}(G,\rho)$.
Denote by $\mathcal{H}_{W,\chi}$ the subalgebra of $\mathcal{H}(G,\rho)$
generated by $\{f_{\tilde{\chi},w}\mid w\in W'\}$. Also, identify
$\mathcal{H}(T,\chi)$ as a subalgebra of $\mathcal{H}(G,\rho)$ using
the embedding $t_{B}$. When $G$ has connected center, $C_{\chi}=1$ assuming 
Hypothesis \ref{hyp}. In that case, $\mathcal{H}_{W,\chi}$ 
and $\mathcal{H}(T,\chi)$ together
generate the full Hecke algebra $\mathcal{H}(G,\rho)$. 

\subsection{Statement of Theorem}\label{sec:statement}
We continue to assume that $G$ is split. 
Extend the triple $(G,B,T)$ to a Chevalley-Steinberg pinning of $G$.
This determines a hyperspecial point $x$ in the Bruhat-Tits building
which gives $G$ the structure of a Chevalley group. With this identification,
$(G,B,T)$ are defined over $\mathcal{O}$ and the hyperspecial subgroup
$G(F)_{x,0}$ at $x$ is $G(\mathcal{O})$. Let $G(F)_{x,0+}$ denote the pro-unipotent radical of $G(F)_{x,0}$. Then $G(F)_{x,0}/G(F)_{x,0+}\cong G(\mathbb{F}_{q})$. We say that a non-degenerate character $\psi$ 
of $U(F)$ is of \emph{generic depth zero} at $x$ if $\psi|U(F)\cap G(F)_{x,0}$ factors through a
generic character of $\text{\ensuremath{\underbar{\ensuremath{\psi}}}}$
of $U(\mathbb{F}_{q})=U(F)\cap G(F)_{x,0}/U(F)\cap G(F)_{x,0+}$ (see \cite{DeRe10}*{\S 1} for 
a more general definition). Note that if $G$ has connected center, then all non-degenerate
characters of $U(F)$ form a single orbit under $T(F)$.

Let $\mathrm{sgn}$ denote the one dimensional representation of $\mathcal{H}_{W,\chi}$
in which $f_{\tilde{\chi},w}$ acts by the scalar $(-1)^{\ell'(w)}$.
Here $\ell'$ denotes the length function on $W'$. 
\begin{thm}
\label{thm:main}%If $\chi=1$, then assume that the $T(F)$-orbit of $\psi$ contains a  
Let $\psi$ be a non-degenerate character of $U(F)$ of generic depth zero at $x$. If $\chi\neq1$, then assume that the center of $G$ is connected. If $\chi$ has positive depth, then assume further that
$F$ has characteristic $0$ and the residue characteristic satisfies
Hypothesis \ref{hyp}. Then $\mathcal{H}(G,\rho)$-module $(c\emph{-ind}_{U(F)}^{G(F)}\psi)^{\rho}$
is isomorphic to $\mathcal{H}(G,\rho)\underset{\mathcal{H}_{W,\chi}}{\otimes}\mathrm{sgn}$. 
\end{thm}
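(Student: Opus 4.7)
My plan is to produce an explicit cyclic generator $v$ of $(c\text{-ind}_{U(F)}^{G(F)}\psi)^{\rho}$ and verify that the finite subalgebra $\mathcal{H}_{W,\chi}$ acts on it by $\mathrm{sgn}$. The connected-center hypothesis on $G$ forces $C_{\chi}=1$, so that $\mathcal{W}_{\chi}=X^{\vee}\rtimes W'$; Roche's basis $\{f_{\tilde{\chi},w}\}_{w\in\mathcal{W}_{\chi}}$ then exhibits both modules in the statement as free of rank one over the commutative subalgebra $\mathcal{H}(T,\chi)$, where the rank-one property on the Gelfand-Graev side is Theorem \ref{thm:cyc}. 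Once the sgn-transformation property of $v$ is established, the universal property of the tensor product will yield a natural surjection $\mathcal{H}(G,\rho)\otimes_{\mathcal{H}_{W,\chi}}\mathrm{sgn}\twoheadrightarrow(c\text{-ind}_{U(F)}^{G(F)}\psi)^{\rho}$, which must be an isomorphism by comparison of ranks over $\mathcal{H}(T,\chi)$.

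To handle positive-depth $\chi$, I would first reduce to the depth-zero case via Roche's Hecke algebra isomorphism $\mathcal{H}(G,\rho)\cong\mathcal{H}(G',I')$, where $G'$ is a split connected reductive $F$-group with root system $\Phi'$ and $I'\subset G'(F)$ is an Iwahori subgroup. Under this identification, $\mathcal{H}_{W,\chi}$ and $\mathcal{H}(T,\chi)$ correspond respectively to the finite and Bernstein subalgebras of $\mathcal{H}(G',I')$. The essential task here is to show that the Gelfand-Graev isotype $(c\text{-ind}_{U(F)}^{G(F)}\psi)^{\rho}$ corresponds under the isomorphism to $(c\text{-ind}_{U'(F)}^{G'(F)}\psi')^{I'}$ for a suitable non-degenerate generic depth-zero character $\psi'$ of $U'(F)$, by transferring the Jacquet-module description underlying Theorem \ref{thm:cyc}. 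This reduces the problem to the case $\chi=1$.

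In the resulting depth-zero setting, $\rho$ is the trivial character of an Iwahori subgroup $I\subset G(\mathcal{O})$, $\mathcal{H}_{W,\chi}=\mathcal{H}_{W}$ is the finite Iwahori-Hecke algebra of the Weyl group $W$, and $\mathcal{H}(G,\rho)$ is the affine Iwahori-Hecke algebra of $G(F)$. I would then choose the cyclic generator $v$ from Theorem \ref{thm:cyc} so that its $\mathcal{H}_{W}$-orbit is supported in $U(F)\cdot G(\mathcal{O})$, which is possible since any representative of a simple reflection in $W$ may be taken in $G(\mathcal{O})$. Using the generic depth-zero hypothesis on $\psi$ and the surjection $G(F)_{x,0}\twoheadrightarrow G(\mathbb{F}_{q})$, Frobenius reciprocity identifies
\[
\mathcal{H}_{W}\cdot v \;\cong\; \mathrm{Hom}_{U(\mathbb{F}_{q})}\bigl(\underline{\psi},\,\mathbb{C}[G(\mathbb{F}_{q})/B(\mathbb{F}_{q})]\bigr)
\]
as $\mathcal{H}_{W}=\mathrm{End}_{G(\mathbb{F}_{q})}(\mathbb{C}[G(\mathbb{F}_{q})/B(\mathbb{F}_{q})])$-modules. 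The finite-group Gelfand-Graev result of Reeder \cite{Re02}*{\S 7.2} asserts that this right-hand side is isomorphic to $\mathrm{sgn}$, yielding the required sgn-transformation property of $v$.

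The hard part will be the first reduction. While Roche's algebra isomorphism $\mathcal{H}(G,\rho)\cong\mathcal{H}(G',I')$ is well documented, carefully transferring the Gelfand-Graev isotype $(c\text{-ind}_{U(F)}^{G(F)}\psi)^{\rho}$ through it requires matching the Jacquet-module descriptions on both sides in a way compatible with the Gelfand-Graev structure. Once this transfer is secured, the finite-group computation via Reeder's theorem is essentially formal.
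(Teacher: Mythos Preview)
Your depth-zero endgame and the final rank comparison over $\mathcal{H}(T,\chi)$ match the paper's. The gap is in your reduction step.

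You propose passing through Roche's isomorphism $\mathcal{H}(G,\rho)\cong\mathcal{H}(G',I')$ to a \emph{different} split group $G'$ with root system $\Phi'$, and then identifying $(c\text{-ind}_{U(F)}^{G(F)}\psi)^{\rho}$ with $(c\text{-ind}_{U'(F)}^{G'(F)}\psi')^{I'}$ across that isomorphism. But Roche's isomorphism is a bare algebra isomorphism; it is not induced by any functor between $\mathcal{R}(G(F))$ and $\mathcal{R}(G'(F))$, and there is no known mechanism carrying Gelfand--Graev isotypes through it. The tool you invoke---``the Jacquet-module description underlying Theorem~\ref{thm:cyc}''---only pins down the $\mathcal{H}(T,\chi)$-module structure (free of rank one, the regular module). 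It does \emph{not} determine the $\mathcal{H}_{W,\chi}$-action on the generator, which is precisely the unknown. So the transfer you need is circular: to know the two Gelfand--Graev isotypes agree as full $\mathcal{H}(G,\rho)$-modules you would already need the $\mathcal{H}_{W,\chi}$-action, i.e., the theorem itself. You flag this as ``the hard part'', but no idea is supplied to bridge it.

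The paper's reduction stays inside $G$. Rather than an abstract $G'$, it passes to a standard Levi $M\subset G$ minimal with $I_{G(F)}(\rho)\subset KM(F)K$. Here the support-preserving isomorphism $\Psi^{M}:\mathcal{H}(M,\rho_{M})\xrightarrow{\sim}\mathcal{H}(G,\rho)$ comes from the $G$-cover formalism \cite{BK98}*{Theorem~7.2(ii)}, and the Bushnell--Henniart formula $(c\text{-ind}_{U(F)}^{G(F)}\psi)_{\bar N}\cong c\text{-ind}_{U_{M}(F)}^{M(F)}\psi_{M}$ gives an honest transfer of the Gelfand--Graev \emph{representation}, not just of Hecke algebras; compatibility with $\Psi^{M}$ is exactly Equation~(\ref{eq:VMG}). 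A character twist by some $\chi_{1}$ of $M(F)$ then makes $\chi\chi_{1}$ depth-zero on $T(F)_{0}$. Note this lands only at \emph{depth-zero} $\chi$, not at $\chi=1$; the paper runs the finite-group step for the $\chi$-isotypic component of $\text{ind}_{U(\mathbb{F}_{q})}^{G(\mathbb{F}_{q})}\underline{\psi}$ directly, which Reeder's result in \cite{Re02}*{\S 7.2} covers for nontrivial $\chi$ under the connected-center hypothesis.
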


\section{Proof of Theorem \ref{thm:main}}

We retain the notations introduced in Sections \ref{sec:GG} and \ref{sec:PSC}.

\subsection{Reduction to depth-zero}

It follows from the the proof of \cite{Ro98}*{Theorem 4.15} (see
also loc. cit., page 385, 2nd last paragraph), that there exists a
standard $F$-Levi subgroup $M$ of $G$ which is the Levi factor
of a standard parabolic $P=MN$ of $G$ and which is minimal with
the property that 
\begin{equation}
I_{G(F)}(\rho)\subset KM(F)K.\label{eq:IGrho_sub}
\end{equation}
Put $(K_{M,}\rho_{M})=(K\cap M,\rho|M(F))$. From \cite{BK98}*{Theorem 7.2(ii)},
it follows that $(K,\rho)$ satisfies the requirements \cite{BK98}*{\S 8.1}
of being $G$-cover of $(K_{M},\rho_{M})$. It also follows from \cite{BK98}*{Theorem 7.2(ii)}
that there is a support preserving Hecke algebra isomorphism
\begin{equation}
\Psi^{M}:\mathcal{H}(M,\rho_{M})\overset{\simeq}{\rightarrow}\mathcal{H}(G,\rho).\label{eq:H-isom}
\end{equation}

By Equation (\ref{eq:cindU}), we have an isomorphism of $\mathcal{H}(M,\rho_{M})$-modules
\begin{equation}
(c\text{-ind}_{U_{M}(F)}^{M(F)}\psi_{M})^{\rho_{M}}\cong((c\text{-ind}_{U(F)}^{G(F)}\psi)_{\bar{N}(F)})^{\rho_{M}}.\label{eq:c-ind1}
\end{equation}

And by Equation (\ref{eq:VMG}), we have a $\Psi^{M}$-equivariant
isomorphism 
\begin{equation}
(c\text{-ind}_{U(F)}^{G(F)}\psi)_{\bar{N}(F)})^{\rho_{M}}\cong(c\text{-ind}_{U(F)}^{G(F)}\psi)^{\rho}.\label{eq:c-ind2}
\end{equation}
Combining Equations (\ref{eq:c-ind1}) and (\ref{eq:c-ind2}), we
get a $\Psi^{M}$-equivariant isomorphism 
\[
(c\text{-ind}_{U_{M}(F)}^{M(F)}\psi_{M})^{\rho_{M}}\cong(c\text{-ind}_{U(F)}^{G(F)}\psi)^{\rho}.
\]
Also it is shown in the proof of \cite{Ro98}*{Theorem 4.15} that
for such an $M$, there is a character $\chi_{1}$ of $M(F)$ such
that $\chi\chi_{1}$ viewed as a character of $T(F)_{0}$ is depth-zero.
We then have an isomorphism 
\begin{equation}
\Psi_{\chi_{1}}:f\in\mathcal{H}(M,\rho_{M})\overset{\simeq}{\mapsto}f\chi_{1}\in\mathcal{H}(M,\rho_{M}\chi_{1}).
\end{equation}
This gives a $\Psi_{\chi_{1}}$-equivariant isomorphism
\[
(c\text{-ind}_{U_{M}(F)}^{M(F)}\psi_{M})^{\rho_{M}}\cong(c\text{-ind}_{U_{M}(F)}^{M(F)}\psi_{M})^{\rho_{M}\chi_{1}}.
\]
We thus have a $\Psi^{M}\circ\Psi_{\chi_{1}}^{-1}$-equivariant isomorphism
\begin{equation}
(c\text{-ind}_{U_{M}(F)}^{M(F)}\psi_{M})^{\rho_{M}\chi_{1}}\cong(c\text{-ind}_{U(F)}^{G(F)}\psi)^{\rho}.\label{eq:vec}
\end{equation}
By Equations (\ref{eq:IGrho}) and (\ref{eq:IGrho_sub}), it follows
that $\Psi^{M}$ restricts to an algebra isomorphism
\begin{equation}
\mathcal{H}_{W(G,T),\chi}\overset{\simeq}{\rightarrow}\mathcal{H}_{W(M,T),\chi}.
\end{equation}
 From the proof of \cite{Ro98}*{Theorem 4.15}, $W(M,T)_{\chi}=W(M,T)_{\chi\chi_{1}}$
and therefore $\Psi_{\chi_{1}}$ restricts to an isomorphism 
\[
\mathcal{H}_{W(M,T),\chi}\cong\mathcal{H}_{W(M,T),\chi\chi_{1}}.
\]
 Thus $\Psi^{M}\circ\Psi_{\chi_{1}}^{-1}$ restricts to an isomorphism
\begin{equation}
\mathcal{H}_{W(M,T),\chi\chi_{1}}\cong\mathcal{H}_{W(G,T),\chi}.\label{eq:subalg}
\end{equation}
 Note that if $G$ has connected center, then so does $M$ (see proof
of \cite{Car85}*{Propositon 8.1.4} for instance for this fact). Thus,
from Equations (\ref{eq:vec}) and (\ref{eq:subalg}), it follows
that to prove Theorem \ref{thm:main}, we can and do assume without
loss of generality that $\chi$ has depth-zero. 
\begin{rem}
For a much more general statement of the isomorphism $\Psi^{M}\circ\Psi_{\chi_{1}}^{-1}$,
see \cite{AdMi}*{\S 8}.
\end{rem}

\subsection{Proof in depth-zero case}

For results of this section, no restriction on characteristic or residue
characteristic is imposed. 

\begin{comment}
Extend the triple $(G,B,T)$ to a Chevalley-Steinberg pinning of $G$.
This determines a hyperspecial point $x$ in the Bruhat-Tits building
which gives $G$ the structure of a Chevalley group. With this identification,
$(G,B,T)$ are defined over $\mathcal{O}$ and the hyperspecial subgroup
$G(F)_{x,0}$ at $x$ is $G(\mathcal{O})$. 
\end{comment}

Let $I$ be the Iwahori
subgroup of $G$ which is in good position with respect to $(\bar{B},T)$
(note here that we are taking opposite Borel) and let $I_{0+}$ denote
its pro-unipotent radical. Put $T(F)_{0+}=I_{0+}\cap T(F)_{0}$. Then
$I/I_{0+}\cong T(F)_{0}/T(F)_{0+}$. Since $\chi$ is depth-zero,
it factors through $T(F)_{0}/T(F)_{0+}$ and consequently lifts to
a character of $I$ which we denote by $\rho$. The pair $(I,\rho)$
is then a $G$-cover of $(T(F)_{0},\chi)$ \cite{Hai}. 
 
 \begin{comment}
Let $G(F)_{x,0+}$ denote the pro-unipotent radical of $G(F)_{x,0}$.
Then\\
 $G(F)_{x,0}/G(F)_{x,0+}\cong G(\mathbb{F}_{q})$. By conjugating
with $T(F)$ if required, we can assume that $\psi$ is of generic-depth
zero at $x$, i.e., $\psi|U(F)\cap G(F)_{x,0}$ factors through a
generic character of $\text{\ensuremath{\underbar{\ensuremath{\psi}}}}$
of $U(\mathbb{F}_{q})=U(F)\cap G(F)_{x,0}/U(F)\cap G(F)_{x,0+}$.
\end{comment}

%By conjugating with $T(F)$ if required, we can assume that $\psi$ is of generic depth-zero
%at $x$. 
Define $\phi:G(F)\rightarrow\mathbb{C}$ to be the function supported
on $U(F).(I\cap\bar{B})$ such that $\phi(ui)=\psi(u)\chi(i)$ for
$u\in U(F)$ and $i\in I\cap\bar{B}$. There is an isomorphism of
$G(\mathbb{F}_{q})$-spaces
\[
(c\text{-ind}_{U(F)}^{G(F)}\psi)^{G(F)_{x,0+}}\cong\text{ind}_{U(\mathbb{F}_{q})}^{G(\mathbb{F}_{q})}\text{\ensuremath{\underbar{\ensuremath{\psi}}}}.
\]
 Under this isomorphism, $\phi$ maps to a function $\text{\ensuremath{\underbar{\ensuremath{\phi}}}}:G(\mathbb{F}_{q})\rightarrow\mathbb{C}^{\times}$
which is supported on $U(\mathbb{F}_{q}).\bar{B}(\mathbb{F}_{q})$
and such that $\text{\ensuremath{\underbar{\ensuremath{\phi}}}}(ub)=\text{\ensuremath{\underbar{\ensuremath{\psi}}}}(u)\chi(b)$
for $u\in U(\mathbb{F}_{q})$ and $b\in\bar{B}(\mathbb{F}_{q})$.
Now $(\text{ind}_{U(\mathbb{F}_{q})}^{G(\mathbb{F}_{q})}\text{\ensuremath{\underbar{\ensuremath{\psi}}}})^{\chi}$
is an irreducible $\mathcal{H}_{W,\chi}$-module isomorphic to the
$\chi$-isotypical component of the irreducible $\text{\ensuremath{\underbar{\ensuremath{\psi}}}}$-generic
constituent of $\text{ind}_{\bar{B}(\mathbb{F}_{q})}^{G(\mathbb{\mathbb{F}}_{q})}\chi$.
Observe that $\text{\ensuremath{\underbar{\ensuremath{\phi}}}}\in(\text{ind}_{U(\mathbb{F}_{q})}^{G(\mathbb{F}_{q})}\text{\ensuremath{\underbar{\ensuremath{\psi}}}})^{\chi}$.
If $\chi$ is trivial then $(\text{ind}_{U(\mathbb{F}_{q})}^{G(\mathbb{F}_{q})}\text{\ensuremath{\underbar{\ensuremath{\psi}}}})^{\chi}$
corresponds to the Steinberg constituent of $\text{ind}_{\bar{B}(\mathbb{F}_{q})}^{G(\mathbb{\mathbb{F}}_{q})}\chi$.
If $G$ has connected center, then it is shown in \cite{Re02}*{\S 7.2, 2nd last paragraph}
that $(\text{ind}_{U(\mathbb{F}_{q})}^{G(\mathbb{F}_{q})}\text{\ensuremath{\underbar{\ensuremath{\psi}}}})^{\chi}\cong\mathrm{sgn}$.
Thus in either case, the $1$-dimensional space spanned by $\text{\ensuremath{\underbar{\ensuremath{\phi}}}}$
affords the $\mathrm{sgn}$ representation of $\mathcal{H}_{W,\chi}$.
Consequently, the $1$-dimensional space spanned by $\text{\ensuremath{\phi}}$
affords the $\mathrm{sgn}$ representation $\mathcal{H}_{W,\chi}$. 

It is readily checked that $\phi$ maps to $1$ under the isomorphism
of Theorem \ref{thm:cyc}. It follows that $\phi$ is a generator
of the cyclic $\mathcal{H}(G,\rho)$-module $(c\text{-ind}_{U(F)}^{G(F)}\psi)^{\rho}$.
We have by Frobenius reciprocity: 
\[
\mathrm{Hom}_{\mathcal{H}_{W,\chi}}(\mathrm{sgn},(c\text{-ind}_{U(F)}^{G(F)}\psi)^{\rho})\cong\mathrm{Hom}_{\mathcal{H}(G,\rho)}(\mathcal{H}(G,\rho)\underset{\mathcal{H}_{W,\chi}}{\otimes}\mathrm{sgn},(c\text{-ind}_{U(F)}^{G(F)}\psi)^{\rho}).
\]
This isomorphism sends $1\mapsto\phi$ to the element $1\otimes1\mapsto\phi$.
Theorem \ref{thm:main} now follows from the fact that $\mathcal{H}(G,\rho)\underset{\mathcal{H}_{W,\chi}}{\otimes}\mathrm{sgn}$
and $(c\text{-ind}_{U(F)}^{G(F)}\psi)^{\rho}$ are free $\mathcal{H}(T,\chi)$-modules
generated by $1\otimes1$ and $\phi$ respectively. 

\section{acknowledgments}

The first named author benefitted from discussions with Dipendra Prasad. 
The authors are thankful to the anonymous referee for pointing out a mistake in an earlier draft of this article. 

\begin{bibdiv}
\begin{biblist}

\bib{AdMi}{article}{title={Regular Bernstein blocks},        author={Jeffrey D. Adler and Manish Mishra},       %year={2019},       
journal={J. Reine Angew. Math.}
status={to appear}
eprint={arXiv:1909.09966}       }

\bib{Bl}{article}{AUTHOR = {Blondel, Corinne},      TITLE = {Crit\`ere d'injectivit\'{e} pour l'application de {J}acquet},    JOURNAL = {C. R. Acad. Sci. Paris S\'{e}r. I Math.},   FJOURNAL = {Comptes Rendus de l'Acad\'{e}mie des Sciences. S\'{e}rie I.               Math\'{e}matique},     VOLUME = {325},       YEAR = {1997},     NUMBER = {11},      PAGES = {1149--1152},       ISSN = {0764-4442},    MRCLASS = {22E50},   MRNUMBER = {1490115}, MRREVIEWER = {David Manderscheid},        DOI = {10.1016/S0764-4442(97)83544-6},        URL = {https://doi.org/10.1016/S0764-4442(97)83544-6}, }

\bib{BH03}{article}{AUTHOR = {Bushnell, Colin J.} author={Henniart, Guy},      TITLE = {Generalized {W}hittaker models and the {B}ernstein center},    JOURNAL = {Amer. J. Math.},   FJOURNAL = {American Journal of Mathematics},     VOLUME = {125},       YEAR = {2003},     NUMBER = {3},      PAGES = {513--547},       ISSN = {0002-9327},    MRCLASS = {22E50 (11F70)},   MRNUMBER = {1981032},        URL =               {http://muse.jhu.edu/journals/american_journal_of_mathematics/v125/125.3bushnell.pdf}, }

\bib{BK98}{article}{AUTHOR = {Bushnell, Colin J.} author= {Kutzko, Philip C.},      TITLE = {Smooth representations of reductive {$p$}-adic groups:               structure theory via types},    JOURNAL = {Proc. London Math. Soc. (3)},   FJOURNAL = {Proceedings of the London Mathematical Society. Third Series},     VOLUME = {77},       YEAR = {1998},     NUMBER = {3},      PAGES = {582--634},       ISSN = {0024-6115},    MRCLASS = {22E50 (22E35)},   MRNUMBER = {1643417}, MRREVIEWER = {David Goldberg},        DOI = {10.1112/S0024611598000574},        URL = {https://doi.org/10.1112/S0024611598000574}, } 

\bib{Car85}{book}{AUTHOR = {Carter, Roger W.},      TITLE = {Finite groups of {L}ie type},     SERIES = {Pure and Applied Mathematics (New York)},       NOTE = {Conjugacy classes and complex characters,               A Wiley-Interscience Publication},  PUBLISHER = {John Wiley \& Sons, Inc., New York},       YEAR = {1985},      PAGES = {xii+544},       ISBN = {0-471-90554-2},    MRCLASS = {20G40 (20-02 20C15)},   MRNUMBER = {794307}, MRREVIEWER = {David B. Surowski}, } 

\bib{CS18}{article}{title={Iwahori component of the Gelfand--Graev representation},   
author={Chan, Kei Yuen}, 
author={Savin, Gordan},
journal={Mathematische Zeitschrift},   volume={288},   number={1-2},   pages={125--133},   year={2018},   publisher={Springer} }

\bib{DeRe10}{article}{  AUTHOR = {DeBacker, Stephen}, author={Reeder, Mark},
     TITLE = {On some generic very cuspidal representations},
   JOURNAL = {Compos. Math.},
  FJOURNAL = {Compositio Mathematica},
    VOLUME = {146},
      YEAR = {2010},
    NUMBER = {4},
     PAGES = {1029--1055},
      ISSN = {0010-437X},
   MRCLASS = {20G05 (20G25 22E50)},
  MRNUMBER = {2660683},
MRREVIEWER = {Dubravka Ban},
       DOI = {10.1112/S0010437X10004653},
       URL = {https://doi.org/10.1112/S0010437X10004653},
}

%\bib{Fin}{article}{title={Types for tame p-adic groups},   author={Fintzen, Jessica},   journal={arXiv preprint %arXiv:1810.04198},   year={2018} }

\bib{Fin}{article} {
    AUTHOR = {Fintzen, Jessica},
     TITLE = {Types for tame {$p$}-adic groups},
   JOURNAL = {Ann. of Math. (2)},
  FJOURNAL = {Annals of Mathematics. Second Series},
    VOLUME = {193},
      YEAR = {2021},
    NUMBER = {1},
     PAGES = {303--346},
      ISSN = {0003-486X},
   MRCLASS = {22E50},
  MRNUMBER = {4199732},
       DOI = {10.4007/annals.2021.193.1.4},
       URL = {https://doi.org/10.4007/annals.2021.193.1.4},
}

\bib{Hai}{article}{title={On Hecke algebra isomorphisms and types for depth-zero principal series}, author={Haines, Thomas, J.}, journal={expository note available at www.math.umd.edu/tjh}}

\bib{Kim07}{article}{AUTHOR = {Kim, Ju-Lee},      TITLE = {Supercuspidal representations: an exhaustion theorem},    JOURNAL = {J. Amer. Math. Soc.},   FJOURNAL = {Journal of the American Mathematical Society},     VOLUME = {20},       YEAR = {2007},     NUMBER = {2},      PAGES = {273--320},       ISSN = {0894-0347},    MRCLASS = {22E50 (20G25 22E35)},   MRNUMBER = {2276772}, MRREVIEWER = {U. K. Anandavardhanan},        DOI = {10.1090/S0894-0347-06-00544-3},        URL = {https://doi.org/10.1090/S0894-0347-06-00544-3}, } 

\bib{KY17}{incollection}{AUTHOR = {Kim, Ju-Lee}, author={Yu, Jiu-Kang},      TITLE = {Construction of tame types},  BOOKTITLE = {Representation theory, number theory, and invariant theory},     SERIES = {Progr. Math.},     VOLUME = {323},      PAGES = {337--357},  PUBLISHER = {Birkh\"{a}user/Springer, Cham},       YEAR = {2017},    MRCLASS = {22E50},   MRNUMBER = {3753917}, MRREVIEWER = {Alan Roche}, }

\bib{Re02}{article}{AUTHOR = {Reeder, Mark},      TITLE = {Isogenies of {H}ecke algebras and a {L}anglands correspondence               for ramified principal series representations},    JOURNAL = {Represent. Theory},   FJOURNAL = {Representation Theory. An Electronic Journal of the American               Mathematical Society},     VOLUME = {6},       YEAR = {2002},      PAGES = {101--126},    MRCLASS = {22E50 (20C08)},   MRNUMBER = {1915088}, MRREVIEWER = {A. Raghuram},        DOI = {10.1090/S1088-4165-02-00167-X},        URL = {https://doi.org/10.1090/S1088-4165-02-00167-X}, }

\bib{Ro98}{article}{AUTHOR = {Roche, Alan},      TITLE = {Types and {H}ecke algebras for principal series               representations of split reductive {$p$}-adic groups},    JOURNAL = {Ann. Sci. \'{E}cole Norm. Sup. (4)},   FJOURNAL = {Annales Scientifiques de l'\'{E}cole Normale Sup\'{e}rieure. Quatri\`eme               S\'{e}rie},     VOLUME = {31},       YEAR = {1998},     NUMBER = {3},      PAGES = {361--413},       ISSN = {0012-9593},    MRCLASS = {22E50},   MRNUMBER = {1621409}, MRREVIEWER = {Bertrand Lemaire},        DOI = {10.1016/S0012-9593(98)80139-0},        URL = {https://doi.org/10.1016/S0012-9593(98)80139-0}, }

\bib{Yu01}{article}{AUTHOR = {Yu, Jiu-Kang},      TITLE = {Construction of tame supercuspidal representations},    JOURNAL = {J. Amer. Math. Soc.},   FJOURNAL = {Journal of the American Mathematical Society},     VOLUME = {14},       YEAR = {2001},     NUMBER = {3},      PAGES = {579--622},       ISSN = {0894-0347},    MRCLASS = {22E50},   MRNUMBER = {1824988}, MRREVIEWER = {Bertrand Lemaire},        DOI = {10.1090/S0894-0347-01-00363-0},        URL = {https://doi.org/10.1090/S0894-0347-01-00363-0}, }

\end{biblist}
\end{bibdiv}
\end{document}